\font \fiverm=cmr5
\font \eightrm=cmr8
\def\diagramme #1{\vskip 4mm \centerline {#1} \vskip 4mm}
 \newcommand{\nc}{\newcommand}
\nc{\surj}{\to\hskip -3mm \to}
\newtheorem{thm}{Theorem}
\newtheorem{prop}[thm]{Proposition}
\def\racine{{\scalebox{0.3}{
\begin{picture}(12,12)(38,-38)
\SetWidth{0.5} \SetColor{Black} \Vertex(45,-30){6}
\end{picture}
}}}
 \def\triangle{\,{\scalebox{0.12}{ 
  \begin{picture}(163,272) (-223,168)
    \SetWidth{3.0}
    \SetColor{Black}
    \Vertex(-213,180){15}
    \Vertex(-71,179){15}
    \Vertex(-138,430){15}
    \ArrowLine(-210,180)(-77,180)
    \ArrowLine(-72,182)(-137,425)
    \ArrowLine(-214,183)(-141,425)
  \end{picture}
}}\,}
\def\graphcograph{\,{\scalebox{0.3}{ 
 \begin{picture}(614,503) (28,-43)
    \SetWidth{1}
    \SetColor{Black}
    \CBox(73.54,390.35)(114.09,436.56){Black}{White}
    \CBox(480.87,398.84)(550.64,449.76){Black}{White}
    \CBox(506.33,240.44)(556.3,289.47){Black}{White}
    \CBox(268.72,239.49)(308.32,296.07){Black}{White}
    \CBox(320.58,266.84)(452.58,398.84){Black}{White}
    \CBox(128.23,247.04)(251.75,416.75){Black}{White}
    \Vertex(95.23,412.98){14.27}
    \Vertex(165,310.21){14.27}
    \Vertex(165.95,374.32){14.27}
    \Vertex(224.41,339.44){14.27}
    \Vertex(346.98,303.61){14.27}
    \Vertex(347.92,373.38){14.27}
    \Vertex(421.47,301.72){14.27}
    \Vertex(420.53,372.44){14.27}
    \Vertex(518.59,423.35){14.27}
    \Vertex(524.24,263.06){14.27}
    \Vertex(284.75,264.95){14.27}
    \SetWidth{2.0}
    \ArrowLine(168.78,378.1)(221.58,341.32)
    \ArrowLine(161.23,372.44)(161.23,313.98)
    \ArrowLine(167.83,310.21)(221.58,340.38)
    \ArrowLine(349.81,376.21)(349.81,308.32)
    \ArrowLine(353.58,373.38)(415.81,373.38)
    \ArrowLine(422.41,372.44)(421.47,306.44)
    \ArrowLine(352.64,303.61)(418.64,304.55)
    \SetWidth{1}
    \ArrowLine(85.8,418.64)(155.58,379.98)
    \ArrowLine(231.95,340.38)(352.64,376.21)
    \ArrowLine(345.1,303.61)(284.75,263.06)
    \ArrowLine(293.24,257.41)(230.06,336.61)
    \ArrowLine(33,460.13)(96.17,412.04)
    \ArrowLine(97.12,422.41)(516.7,426.18)
    \ArrowLine(423.35,299.84)(519.53,267.78)
    \ArrowLine(514.81,423.35)(521.41,268.72)
    \ArrowLine(530.84,264.01)(640.22,308.32)
    \ArrowLine(524.24,423.35)(643.05,431.84)
    \ArrowLine(283.81,207.43)(286.64,259.29)
    \Vertex(280.04,48.09){14.27}
    \Vertex(516.7,152.75){14.27}
    \Vertex(182.92,97.12){17.91}
    \Vertex(91.46,150.86){14.27}
    \Vertex(393.18,101.83){17.91}
    \Vertex(528.96,34.89){14.27}
    \ArrowLine(28.29,192.35)(83.92,155.58)
    \ArrowLine(93.35,149.92)(178.2,99)
    \ArrowLine(94.29,152.75)(511.04,153.69)
    \ArrowLine(516.7,150.86)(528.96,36.77)
    \ArrowLine(518.59,154.63)(624.19,165)
    \ArrowLine(534.61,33.94)(620.42,71.66)
    \ArrowLine(396.01,101.83)(523.3,34.89)
    \ArrowLine(388.47,99)(282.87,48.09)
    \ArrowLine(277.21,-8.49)(280.98,44.32)
    \ArrowLine(280.04,52.8)(181.98,94.29)
    \ArrowLine(189.52,99.95)(382.81,107.49)
    \Text(33,-43.37)[lb]{\huge{\Black{A graph $\Gamma$ together with a covering subgraph $\gamma$ and the contracted graph $\Gamma/\gamma$.}}}
    \SetWidth{2.0}
    \Line(159.35,377.15)(128.23,394.13)
    \Line(234.78,341.32)(251.75,346.04)
    \Line(231.95,331.89)(251.75,309.27)
    \Line(339.44,372.44)(321.52,366.78)
    \Line(340.38,300.78)(320.58,285.69)
    \Line(429.01,297.01)(452.58,289.47)
  \end{picture}
  }}\,}
\def\hexagone{\,{\scalebox{0.2}{ 
\begin{picture}(534,499) (88,-58)
    \SetWidth{2.0}
    \SetColor{Black}
    \CBox(518,35)(622,355){Black}{White}
    \CBox(88,-58)(427,123){Black}{White}
    \CBox(88,252)(424,441){Black}{White}
    \ArrowLine(166,295)(350,376)
    \SetWidth{1}
    \Vertex(160,293){20.62}
    \Vertex(573,293){20.62}
    \Vertex(576,87){20.62}
    \Vertex(157,82){20.62}
    \Vertex(357,382){20.62}
    \Vertex(362,-16){20.62}
    \SetWidth{2.0}
    \ArrowLine(571,293)(366,378)
    \ArrowLine(576,294)(576,94)
    \ArrowLine(363,-16)(572,82)
    \ArrowLine(359,-14)(163,82)
    \ArrowLine(160,290)(159,89)
  \end{picture}
  }}\,}
 \def\boucledeux{\,{\scalebox{0.12}{ 
 \begin{picture}(400,400) (300,0)
    \SetWidth{3.0}
    \SetColor{Black}
    \Vertex(378,10){15}
    \Vertex(380,172){15}
    \ArrowArc(348.17,90.14)(85.97,-65.37,68.27)
    \ArrowArc(415.57,90.24)(88.6,115.81,244.91)
  \end{picture}
}}\hskip -8mm}
 \def\flotdeux{\,{\scalebox{0.12}{ 
   \begin{picture}(67,175) (315,-227)
    \SetWidth{3.0}
    \SetColor{Black}
    \Vertex(372,-217){15}
    \Vertex(372,-63){15}
    \ArrowLine(371,-65)(371,-214)
    \ArrowArc(398.67,-141)(81.82,109.76,250.24)
  \end{picture}
}}\,}
 \def\propag{\,{\scalebox{0.12}{ 
   \begin{picture}(67,175) (315,-227)
    \SetWidth{3.0}
    \SetColor{Black}
    \Vertex(372,-217){15}
    \Vertex(372,-63){15}
    \ArrowLine(371,-65)(371,-214)
  \end{picture}
}}\,}
\def\source{\,{\scalebox{0.12}{ 
  \begin{picture}(30,412) (0,215)
    \SetWidth{3.0}
    \SetColor{Black}
    \Vertex(10,315){15}
    \ArrowLine(10,326)(10,412)
    \ArrowLine(10,307)(10,223)
  \end{picture}
}}\,}
 \def\flot{\,{\scalebox{0.12}{ 
   \begin{picture}(30,412) (0,215)
    \SetWidth{3.0}
    \SetColor{Black}
    \Vertex(10,315){15}
    \ArrowLine(10,225)(10,311)
    \ArrowLine(10,326)(10,412)
  \end{picture}
}}\,}
 \def\puits{\,{\scalebox{0.12}{ 
  \begin{picture}(30,401) (0,215)
    \SetWidth{3.0}
    \SetColor{Black}
    \Vertex(10,315){15}
    \ArrowLine(10,225)(10,311)
    \ArrowLine(11,401)(11,315)
  \end{picture}  
}}\,}
\def\sourcebis{\,{\scalebox{0.12}{ 
  \begin{picture}(30,412) (0,215)
    \SetWidth{3.0}
    \SetColor{Black}
    \Vertex(10,315){15}
    \ArrowLine(10,400)(10,480)
    \Vertex(10,400){15}
    \ArrowLine(10,326)(10,412)
    \ArrowLine(10,307)(10,223)
  \end{picture}
}}\,}
 \def\flotbis{\,{\scalebox{0.12}{
 \begin{picture}(30,412) (0,215)
    \SetWidth{3.0}
    \SetColor{Black}
    \Vertex(10,315){15}
    \ArrowLine(10,400)(10,480)
    \Vertex(10,400){15}
    \ArrowLine(10,326)(10,412)
    \ArrowLine(10,223)(10,310)
  \end{picture}
}}\,}
 \def\puitsbis{\,{\scalebox{0.12}{ 
  \begin{picture}(30,412) (0,215)
    \SetWidth{3.0}
    \SetColor{Black}
    \Vertex(10,315){15}
    \ArrowLine(10,480)(10,400)
    \Vertex(10,400){15}
    \ArrowLine(10,326)(10,412)
    \ArrowLine(10,223)(10,310)
  \end{picture}
}}\,}
\def\point{{\scalebox{0.12}{
\begin{picture}(12,12)(-38,38)
\SetWidth{3.0} \SetColor{Black} \Vertex(-30,45){15}
\end{picture}
}}\,}
\nc{\ignore}[1]{{}}
\nc{\mrm}[1]{{\rm #1}}
\nc{\dirlim}{\displaystyle{\lim_{\longrightarrow}}\,}
\nc{\invlim}{\displaystyle{\lim_{\longleftarrow}}\,}
\nc{\vep}{\varepsilon} \nc{\ep}{\epsilon}
\nc{\sigmat}{\widetilde\sigma}
\nc{\ostar}{\overline{*}}
\nc{\mchar}{\mrm{Char}}
\nc{\Hom}{\mrm{Hom}}
\nc{\id}{\mrm{id}}
\nc{\remark}{\noindent{\bf{Remark:}}}
\nc{\remarks}{\noindent{\bf{Remarks:}}}
 \nc{\delete}[1]{}
 \nc{\grad}[1]{^{({#1})}}
 \nc{\fil}[1]{_{#1}}
\nc{\BA}{{\Bbb A}} \nc{\CC}{{\Bbb C}} \nc{\DD}{{\Bbb D}}
\nc{\EE}{{\Bbb E}} \nc{\FF}{{\Bbb F}} \nc{\GG}{{\Bbb G}}
\nc{\HH}{{\Bbb H}} \nc{\LL}{{\Bbb L}} \nc{\NN}{{\Bbb N}}
\nc{\PP}{{\Bbb P}} \nc{\QQ}{{\Bbb Q}} \nc{\RR}{{\Bbb R}}
\nc{\TT}{{\Bbb T}} \nc{\VV}{{\Bbb V}} \nc{\ZZ}{{\Bbb Z}}
\nc{\Cal}[1]{{\mathcal {#1}}}
\nc{\mop}[1]{\mathop{\hbox {\rm #1} }\nolimits}
\nc{\smop}[1]{\mathop{\hbox {\eightrm #1} }\nolimits}
\nc{\ssmop}[1]{\mathop{\hbox {\fiverm #1} }\nolimits}
\nc{\mopl}[1]{\mathop{\hbox {\rm #1} }\limits}
\nc{\frakg}{{\frak g}}
\nc{\g}[1]{{\frak {#1}}}
\def \restr#1{\mathstrut_{\textstyle |}\raise-8pt\hbox{$\scriptstyle #1$}}
\def \srestr#1{\mathstrut_{\scriptstyle |}\hbox to
  -1.5pt{}\raise-4pt\hbox{$\scriptscriptstyle #1$}}
\nc{\wt}{\widetilde}
\nc{\wh}{\widehat}
\nc{\un}{\hbox{\bf 1}}
\nc{\redtext}[1]{\textcolor{red}{\tt #1}}
\nc{\bluetext}[1]{\textcolor{blue}{#1}}
\nc{\comment}[1]{[[{\tt {#1}}]] }
\nc{\R}{\mathbb R}
\nc\fleche[1]{\mathop{\hbox to #1 mm{\rightarrowfill}}\limits}
\def\semi{\mathrel{\times}\kern -.85pt\joinrel\mathrel{\raise
    1.4pt\hbox{${\scriptscriptstyle |}$}}}
\nc{\np}{/\hskip -2.3mm\pi}
\nc{\snp}{/\hskip -1.8mm\pi}
\def\ta1{{\scalebox{0.2}{ 
\begin{picture}(12,12)(38,-38)
\SetWidth{0.5} \SetColor{Black} \Vertex(45,-33){5.66}
\end{picture}}}}
\begin{document}

\title[Hopf algebras of oriented graphs]
      {On bialgebras and Hopf algebras of oriented graphs}

\author{Dominique Manchon}
\address{Univ. Blaise Pascal,
         C.N.R.S.-UMR 6620,
         63177 Aubi\`ere, France}       
         \email{manchon@math.univ-bpclermont.fr}
         \urladdr{http://math.univ-bpclermont.fr/~manchon/}

\date{July 4th 2011}
\begin{abstract}
We define two coproducts for cycle-free oriented graphs, thus building up two commutative
connected graded Hopf algebras, such that one is a comodule-coalgebra on the
other, thus generalizing the result obtained in \cite{CEM} for Hopf algebras of
rooted trees.
\end{abstract}

\maketitle


\tableofcontents


\section{Introduction}
\label{sect:intro}
Hopf algebras of graphs have been introduced by D. Kreimer \cite{K98},
\cite{CK0}, \cite{CK1}, \cite{CK2} in order to explain
the combinatorics of renormalization in Quantum Field Theory. Whereas the
product is free commutative, the coproduct is defined by suitable subgraphs
and contracted graphs, and depends on the type of graphs considered.\\

We focus
on various Hopf algebras of oriented graphs: after
giving the basic definitions we detail two examples: oriented graphs in general, and then locally one-particle
irreducible graphs. We show on concrete computations that the coproduct of a
locally 1PI oriented graph $\Gamma$ depends on whether one takes the local
1PI-ness of $\Gamma$ into account or not.\\

We also
explore a third example, the Hopf algebra $\Cal H_{\smop{CF}}$ of oriented \textsl{cycle-free} graphs. The associated poset
structure on the set of vertices yields still another coproduct which generalizes the
coproduct of rooted trees given by admissible cuts \cite{K98}, \cite{F02}. We show that the Hopf
algebra $\Cal H_{\smop{CFc}}$ thus obtained is a comodule-coalgebra on the Hopf
algebra $\Cal H_{\smop{CF}}$. Modulo discarding the external edges, this generalizes
the results of \cite{CEM} on Hopf algebras of rooted trees.
\subsection*{Acknowledgements}
Research partly supported by CNRS, GDR "Renormalisation". I particularly thank Christian Brouder and Fabien Vignes-Tourneret for useful remarks.
\section{Oriented Feynman graphs}
\subsection{Basic definitions}
An {\sl oriented Feynman graph\/} is an oriented (non-planar) graph with a
finite number of vertices and edges, which can be internal or external. An {\sl internal edge\/} is an edge connected at
both ends to a vertex (which can be the same in case of a self-loop), an {\sl
  external edge\/} is an edge with one open end, the other end being connected
to a vertex. An oriented Feynman graph will be called {\sl vacuum graph,
  tadpole graph, self-energy graph\/}, resp. {\sl interaction graph\/} if its
number of external edges is $0$, $1$, $2$, resp. $>2$.\\

A \emph{cycle} in an oriented Feynman graph is a finite collection
$(e_1,\ldots,e_{n})$ of oriented internal edges such that the target of $e_k$
coincides with the source of $e_{k+1}$ for any $k=1,\ldots,n$ modulo $n$. The \emph{loop number} of a graph $\Gamma$ is given by:
\begin{equation}
L(\Gamma)=I(\Gamma)-V(\Gamma)+1,
\end{equation}
where $I(\Gamma)$ is the number of internal edges of the graph $\Gamma$ and where $V(\Gamma)$ is the
number of vertices. We shall mainly focus on cycle-free oriented graphs, for
which there exists a poset structure on the set of vertices: namely, $v<w$ if
and only if there exists a path from $v$ to $w$, i.e. a collection
$(e_1,\ldots,e_n)$ of edges such that the target of $e_k$ coincides with the
source of $e_{k+1}$ for $k=1,\ldots,n-1$, and such that $v$ (resp. $w$) is the
source (resp. the target) of $e_1$ (resp. $e_n$).\\

The edges (internal or external) will be of different types labelled by a
positive integer ($1,2,3,\ldots$), each type being represented by the way the
corresponding edge is drawn (full, dashed, wavy, various colours, etc...). Let
$\tau (e)\in\NN^*$ be the type of the edge $e$. For any vertex $v$ let
$\mop{st}(v)$ be the {\sl star\/} of $v$, i.e. the set of all half-edges attached
to $v$ (hence a self-loop yields two half-edges). Hence the valence of the vertex
is given by the cardinal of $\mop{st}(v)$. Finally to each vertex $v$ we
associate its {\sl type\/} $T(v)$, defined as the sequence $(n_1,\ldots ,n_r)$ of positive
integers where $n_j$ stands for the number of edges of type $j$ in
$\mop{st}(v)$. The orientation does not enter into the definition of the type
of a vertex.\\

A {\sl one-particle irreducible graph\/} (in short, 1PI graph) is a connected
graph which remains connected when we cut any internal edge. A disconnected
graph is said to be {\sl locally 1PI\/} if any of its connected components is
1PI.
\subsection{Connected subgraphs, covering subgraphs and contracted graphs}
Let $\Gamma$ be an oriented Feynman graph, let $\Cal V(\Gamma)$ be the set of
its vertices, and let $P$ be a non-empty subset of $\Cal V(\Gamma)$. The subgraph $\Gamma(P)$ associated to $P$ is defined
as follows: the internal edges of $\Gamma(P)$ are the internal edges of
$\Gamma$ with source and target in $P$, and the external edges are the
external edges of $\Gamma$ with source or target in $P$, as well as the
internal edges of $\Gamma$ with one end in $P$ and the other end outside
$P$. The orientations of the edges of $\gamma_P$ are obviously derived from
their orientation in $\Gamma$. The subgraph $\Gamma(P)$ is connected if and
only if for any $v,w\in P$ one can go from $v$ to $w$ by
following internal edges of $\Gamma$ with both ends in $P$, forwards or
backwards. We set by convention $\Gamma(\emptyset)=\un$, where $\un$ is the
empty graph. For any $Q\subseteq P\subseteq\Cal V(\Gamma)$ we obviously have:
\begin{equation}
\Gamma(P)(Q)=\Gamma(Q).
\end{equation}
A \textsl{covering subgraph} of $\Gamma$ is an oriented Feynman graph $\gamma$
(in general disconnected), given by
a collection $\{\Gamma(P_1),\ldots,\Gamma(P_n)\}$ of connected
subgraphs such that $P_j\cap P_k=\emptyset$ for $j\not =k$, and such that any vertex of $\Gamma$ belongs to
$P_j$ for some (unique) $j\in\{1,\ldots ,n\}$. Covering subgraphs of $\Gamma$
are in one-to-one correspondence with partitions of $\Cal V(\Gamma)$ into
connected subsets, which refine the partition into connected components.
For any covering subgraph $\gamma$, the \textsl{contracted graph} $\Gamma/\gamma$ is defined by shrinking all
connected components of $\gamma$ inside $\Gamma$ onto a
point. 
\begin{equation*}
\graphcograph
\end{equation*}
\vskip 6mm
\noindent The following proposition is straightforward:
\begin{prop}\label{prop:cf}
Let $\Gamma$ be an oriented cycle-free Feynman graph. Let
$\gamma$ be a covering subgraph and let $V=P_1\sqcup\cdots\sqcup P_n$ be the
associated partition of $V$. If $\Gamma/\gamma$
is cycle-free, then $P_j$ is a convex subset of the poset $\Cal V(\Gamma)$ for any $j\in\{1,\ldots ,n\}$.
\end{prop}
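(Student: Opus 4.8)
The plan is to prove the contrapositive: assuming that some block $P_j$ is \emph{not} convex in the poset $\Cal V(\Gamma)$, I will exhibit a cycle in $\Gamma/\gamma$. If $P_j$ is not convex, then by definition there are vertices $v_1,v_2\in P_j$ and a vertex $w\in\Cal V(\Gamma)$ with $v_1<w<v_2$ in $\Cal V(\Gamma)$ but $w\notin P_j$. Since $\gamma$ is a covering subgraph, $w$ belongs to a unique block $P_k$, and necessarily $k\neq j$.

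Next I would unwind what contraction does to a directed path. Recall that shrinking each $\Gamma(P_i)$ onto a point $\overline{P_i}$ deletes every internal edge of $\Gamma$ whose two ends lie in the same block, keeps (with its orientation) every internal edge joining two distinct blocks $P_i$ and $P_\ell$ as an internal edge of $\Gamma/\gamma$ from $\overline{P_i}$ to $\overline{P_\ell}$, and leaves external edges untouched. Consequently, if $(e_1,\dots,e_m)$ is a directed path in $\Gamma$ visiting the vertices $u_0,u_1,\dots,u_m$ in order, then discarding the edges internal to a block and keeping the rest produces a directed path in $\Gamma/\gamma$ from $\overline{P}$ to $\overline{P'}$, where $P$ and $P'$ are the blocks containing $u_0$ and $u_m$ respectively; moreover this path has \emph{at least one} edge as soon as $u_0$ and $u_m$ lie in distinct blocks.

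Applying this to a directed path from $v_1$ to $w$ realizing $v_1<w$ gives a nonempty directed path $\pi_1$ in $\Gamma/\gamma$ from $\overline{P_j}$ to $\overline{P_k}$ (nonempty because $\overline{P_j}\neq\overline{P_k}$), and applying it to a directed path from $w$ to $v_2$ realizing $w<v_2$ gives a nonempty directed path $\pi_2$ in $\Gamma/\gamma$ from $\overline{P_k}$ to $\overline{P_j}$. Juxtaposing the edges of $\pi_1$ and then those of $\pi_2$ yields a nonempty finite family of internal edges of $\Gamma/\gamma$ in which the target of each edge is the source of the following one cyclically, i.e. a cycle in $\Gamma/\gamma$, contradicting the hypothesis; hence every $P_j$ is convex. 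The only delicate point is the middle step: one must check not merely that the projection of a directed path is again a directed path, but that it is \emph{nonempty} whenever its endpoints sit in different blocks, since this is exactly what makes the final concatenation an honest cycle rather than the empty list of edges.
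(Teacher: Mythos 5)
Your proof is correct. Note that the paper offers no proof at all for this proposition --- it is simply announced as ``straightforward'' --- so there is nothing to compare against except the evidently intended argument, which is precisely the one you give: pass to the contrapositive, project the two directed paths witnessing $v_1<w<v_2$ through the contraction, and concatenate to obtain a cycle in $\Gamma/\gamma$. You also correctly isolate the one point that makes the claim non-vacuous, namely that the image of a directed path whose endpoints lie in distinct blocks is a \emph{nonempty} directed path in $\Gamma/\gamma$, which is what guarantees the concatenation is an actual cycle in the sense of the paper's definition rather than an empty edge list.
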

\noindent Note that the converse is not true, as shown by the following counterexample:
\begin{equation*}
\hexagone
\end{equation*}
The \textsl{residue} of the graph $\Gamma$ is the contracted graph
$\Gamma/\Gamma$, where the covering subgraph is the graph $\Gamma$
itself. The associated partition of $\Cal V(\Gamma)$ is the coarsest possible,
i.e. it is given by its connected components. The residue is the only graph with no internal edge and the same
external edges than those of $\Gamma$. At the other extreme, the partition of
$\Cal V(\Gamma)$ into singletons (i.e. the finest possible) gives rise to the unique covering subgraph $\gamma_0$
without internal edges. The contracted
graph $\Gamma/\gamma_0$ is then equal to $\Gamma$. Given
two covering subgraphs $\gamma$ and $\delta$, say that $\gamma$
\textsl{contains} $\delta$ if the partition associated with $\delta$ refines
the partition associated with $\gamma$. In this case $\delta$ can also be seen
as a covering subgraph of $\gamma$.
\section{Some bialgebras and Hopf algebras of graphs}
\subsection{The full bialgebra of oriented Feynman graphs}\label{sect:ofg}
Let $\wt{\Cal H}$ be the vector space spanned by (connected or not) oriented
Feynman graphs. The product is given by concatenation, hence $\wt{\Cal H}=S(V)$,
where $V$ is the vector space spanned by connected oriented
Feynman graphs. The unit $\un$ is identified with the empty graph, and the coproduct
is given by:
\begin{equation}
\Delta(\Gamma)=\sum_{\gamma \smop{ covering subgraph of }\Gamma}\gamma\otimes\Gamma/\gamma.
\end{equation}
This is obviously an algebra morphism, and we have:
\begin{align*}
(\Delta\otimes I)\Delta(\Gamma)&=
\sum_{\delta\smop{ covering subgraph of }\gamma,\ \gamma\smop{ covering subgraph of }\Gamma} \delta\otimes\gamma/\delta\otimes\Gamma/\gamma,\\
(I\otimes\Delta)\Delta(\Gamma)&=
\sum_{\delta\smop{ covering subgraph of }\Gamma, \ \wt\gamma\smop{ covering subgraph of }\Gamma/\delta} \delta\otimes\wt\gamma\otimes(\Gamma/\delta)/\wt\gamma.
\end{align*}
There is an obvious bijection $\gamma\mapsto\wt\gamma=\gamma/\delta$ from
covering subgraphs of $\Gamma$ containing $\delta$ onto covering subgraphs of
$\Gamma/\delta$, given by shrinking $\delta$. As we have the obvious
``transitive shrinking property'':
\begin{equation}
\Gamma/\gamma=(\Gamma/\delta)/(\gamma/\delta),
\end{equation}
the two expressions coincide, hence $\Delta$ is coassociative. The co-unit is
given by $\varepsilon(\un)=1$ and $\varepsilon(\Gamma)=0$ for any non-empty
graph $\Gamma$. The bialgebra $\wt{\Cal H}$ is graded by the number of internal
edges (and even multi-graded by the numbers of internal edges of various given
types). The elements of degree zero are the residues, i.e. the graphs without
internal edges. Any residue graph $R$ is grouplike, i.e. $\Delta(R)=R\otimes
R$. As an example of coproduct computation (with only one type of edges), we have:
\begin{equation}\label{eq:coproduit1}
\Delta(\triangle)=\source\flot\puits\otimes\triangle+\triangle\otimes\point
+\source\puitsbis\otimes\flotdeux+\flot\flotbis\otimes\boucledeux
+\puits\sourcebis\otimes\flotdeux.
\end{equation}
\subsection{The Hopf algebra of oriented Feynman graphs}\label{sect:hofg}
The Hopf algebra $\Cal H$ is obtained from $\wt{\Cal H}$ by identifying all
degree zero elements with the unit $\un$, namely:
\begin{equation}
\Cal H=\wt{\Cal H}/\Cal J,
\end{equation}
where $\Cal J$ is the (bi-)ideal generated by the elements $\Gamma-\un$ where
$\Gamma$ is any graph without internal edges. The bialgebra $\Cal H$ is
obviously connected graded, hence it is a Hopf algebra, which can be
identified as a commutative algebra with $S(W)$, where $W$ is the vector space
spanned by connected oriented Feynman graphs with at least one internal edge. The coproduct computation \eqref{eq:coproduit1} yields:
\begin{equation}\label{eq:coproduit2}
\Delta(\triangle)=\un\otimes\triangle+\triangle\otimes\un
+\puitsbis\otimes\flotdeux+\flotbis\otimes\boucledeux
+\sourcebis\otimes\flotdeux.
\end{equation}
\subsection{Locally 1PI graphs}\label{sect:1pi}
A similar construction holds for locally 1PI graphs: the bialgebra $\wt{\Cal
  H}_{\smop{1PI}}$ is given by $S(V_{\smop{1PI}})$, where $V_{\smop {1PI}}$ is the vector space
spanned by connected oriented 1PI Feynman graphs. The coproduct is given by:
\begin{equation}
\Delta(\Gamma)=\sum_{\gamma \smop{ locally 1PI covering subgraph of }\Gamma}\gamma\otimes\Gamma/\gamma,
\end{equation}
and is coassociative due to the fact that the transitive shrinking property of
Paragraph \ref{sect:ofg} still makes sense for locally 1PI covering
subgraphs. The (multi-) grading given by the number of internal edges is still
relevant, but an alternative grading is given by the loop number. The
associated Hopf algebra ${\Cal H}_{\smop{1PI}}$ is built up similarly to $\Cal
H$ in Paragraph \ref{sect:hofg}, by identifying the elements of degree zero
with the unit $\un$. Note that, for both gradings, the elements of degree zero
are the residues: it comes from the fact that a graph $\Gamma$ with loop
number $L(\Gamma)=0$ which is locally 1PI cannot have any internal edge. Here is an example of coproduct computation, in $\wt{\Cal H}_{\smop{1PI}}$ and ${\Cal H}_{\smop{1PI}}$ respectively:
\begin{eqnarray}
\Delta(\triangle)&=&\source\flot\puits\otimes\triangle+\triangle\otimes\point,\label{eq:coproduit3}\\
\Delta(\triangle)&=&\un\otimes\triangle+\triangle\otimes\un.\label{eq:coproduit4}
\end{eqnarray}
\subsection{Cycle-free graphs}\label{sect:cf}
Let $\Gamma$ be a cycle-free oriented Feynman graph. In view of Proposition
\ref{prop:cf}, we say that a covering subgraph $\gamma$ of $\Gamma$ is
\textsl{poset-compatible} if the contracted graph $\Gamma/\gamma$ is cycle-free. It implies that all elements of the associated partition are convex
subsets of the poset $\Cal V(\Gamma)$. The bialgebra $\wt{\Cal
  H}_ {\smop{CF}}$ is given by $S(V_{\smop{CF}})$, where $V_{\smop {CF}}$ is the vector space
spanned by connected oriented cycle-free Feynman graphs. The coproduct is given by:
\begin{equation}
\Delta(\Gamma)=\sum_{\gamma \smop{ poset-compatible covering subgraph of }\Gamma}\gamma\otimes\Gamma/\gamma,
\end{equation}
and is coassociative due to the fact that the transitive shrinking property of
Paragraph \ref{sect:ofg} still makes sense for poset-compatible covering
subgraphs of a cycle-free graph. The (multi-) grading given by the number of internal edges is still
relevant, and the
associated Hopf algebra ${\Cal H}_{\smop{CF}}$ is built up similarly to $\Cal
H$ in Paragraph \ref{sect:hofg}, by identifying the elements of degree zero
with the unit $\un$. Note that, contrarily to the previous examples, the
orientation of the edges enters here in an essential way. Our favourite coproduct computation takes the following form, in $\wt{\Cal H}_ {\smop{CF}}$ and ${\Cal H}_{\smop{CF}}$ respectively:
\begin{eqnarray}
\Delta(\triangle)&=&\source\flot\puits\otimes\triangle+\triangle\otimes\point
+\source\puitsbis\otimes\flotdeux
+\puits\sourcebis\otimes\flotdeux,\label{eq:coproduit5}\\
\Delta(\triangle)&=&\un\otimes\triangle+\triangle\otimes\un
+\puitsbis\otimes\flotdeux
+\sourcebis\otimes\flotdeux.\label{eq:coproduit6}
\end{eqnarray}
\subsection{Cycle-free locally 1PI graphs}
We can combine Paragraphs \ref{sect:1pi} and \ref{sect:cf}~: the bialgebra
$\wt{\Cal H}_{\smop{CF1PI}}$ of cycle-free locally 1PI graphs is given by the
intersection $\wt{\Cal H}_{\smop{CF}}\cap\wt{\Cal H}_{\smop{1PI}}$. This is the
free commutative algebra on the vector space spanned by the space
$V_{\smop{CF1PI}}$ of connected cycle-free locally 1PI graphs, and the
coproduct is given by:
\begin{equation}
\Delta(\Gamma)=\sum_{{\gamma \smop{ poset-compatible locally 1PI}\atop \smop{ covering subgraph of }\Gamma}}\gamma\otimes\Gamma/\gamma,
\end{equation}
and the associated Hopf algebra ${\Cal H}_{\smop{CF1PI}}$ is obtained by
identifying the residue graphs with the empty graph $\un$. Details are left to the reader.
\section{A comodule-coalgebra on the bialgebra of oriented cycle-free graphs}
\subsection{Another Hopf algebra structure on oriented cycle-free graphs}\label{AC}
Consider the bialgebra $\wt{\Cal H}_ {\smop{CF}}=S(V_{\smop{CF}})$ of
Paragraph \ref{sect:cf}. We keep the same commutative product, but we define
another coproduct as follows. For any cycle-free oriented graph $\Gamma$ we
set:
\begin{equation}
\Delta_{c}(\Gamma)=\sum_{V_1\sqcup V_2=\Cal V(\Gamma), \ V_2<V_1}\Gamma(V_1)\otimes\Gamma(V_2).
\end{equation}
The inequality $V_2<V_1$ means that for any comparable $v_1\in V_1$ and
$v_2\in V_2$ we have $v_2<v_1$ in the poset $\Cal V(\Gamma)$. Such a pair of
disjoint subsets will be called an \textsl{admissible cut}. It matches the
usual notion of admissible cut when the graph $\Gamma$ is a
rooted tree\cite{K98}, \cite{F02}, \cite{Murua}. Note however that the relation $<$ on the set of subsets of $\Cal V(\Gamma)$ is not transitive. The coproduct is obviously coassociative, as we
have:
\begin{equation}
(I\otimes\Delta_{c})\Delta_{c}(\Gamma)=(\Delta_{c}\otimes
I)\Delta_{c}(\Gamma)=\sum_{V_1\sqcup V_2\sqcup V_3=\Cal V(\Gamma), \ V_3<V_2<V_1}\Gamma(V_1)\otimes\Gamma(V_2)\otimes\Gamma(V_3),
\end{equation}
where the notation $V_3<V_2<V_1$ means $V_3<V_2$, $V_2<V_1$ {\bf and} $V_3<V_1$. This coproduct is also an algebra morphism. We denote by ${\Cal H}_{\smop{CFc}}$ the
connected graded Hopf algebra given by this coproduct. It is naturally
isomorphic to $\wt{\Cal H}_{\smop{CF}}$ as a commutative algebra, but the grading is now given by the
number of vertices. As an example, we have:
\begin{equation}
\Delta_c(\triangle)=\triangle\otimes\un+\un\otimes\triangle+\source\otimes\puitsbis+\sourcebis\otimes\puits.
\end{equation}
\subsection{The comodule-coalgebra structure on ${\Cal H}_{\smop{CFc}}$}
The coproduct $\Delta$ on the bialgebra $\wt{\Cal H}_{\smop{CF}}$ can also be seen as a
left (resp. right) coaction $\Phi:{\Cal H}_{\smop{CFc}}\to\wt{\Cal H}_{\smop{CF}}\otimes{\Cal
  H}_{\smop{CFc}}$, resp. $\Psi:{\Cal H}_{\smop{CFc}}\to{\Cal H}_{\smop{CFc}}\otimes\wt{\Cal
  H}_{\smop{CF}}$.
 \begin{thm}\label{diagramme}
The left coaction map $\Phi$ verifies:
\begin{equation}
(Id_{\wt {\Cal H}_{\smop{CF}}}\otimes\Delta_{c})\circ \Phi=m^{1,3}\circ
(\Phi\otimes\Phi)\circ\Delta_{c},
\end{equation}
i.e. the following diagram commutes:
\diagramme{
\xymatrix{{\mathcal H}_{\smop{CFc}}\ar[rr]^{\Phi}\ar[d]_{\Delta_{c}}
&&\wt{\mathcal H}_{\smop{CF}}\otimes{\mathcal H}_{\smop{CFc}}\ar[dd]^{I\otimes\Delta_{c}}\\
{\mathcal H}_{\smop{CFc}}\otimes{\mathcal H}_{\smop{CFc}}\ar[d]_{\Phi\otimes\Phi}&&\\
\wt{\mathcal H}_{\smop{CF}}\otimes{\mathcal H}_{\smop{CFc}}\otimes\wt{\mathcal H}_{\smop{CF}}\otimes{\mathcal
  H}_{\smop{CFc}}
\ar[rr]_{m^{1,3}}&&\wt{\mathcal H}_{\smop{CF}}\otimes{\mathcal H}_{\smop{CFc}}\otimes{\mathcal H}_{\smop{CFc}}
}
}
where:
\begin{eqnarray*}
m^{1,3}:\wt{\mathcal H}_{\smop{CF}}\otimes\mathcal{H}_{\smop{CFc}}\otimes\wt{\mathcal H}_{\smop{CF}}\otimes\mathcal{H}_{\smop{CFc}}&\longrightarrow&
\wt{\mathcal H}_{\smop{CF}}\otimes\mathcal{H}_{\smop{CFc}}\otimes\mathcal{H}_{\smop{CFc}}\\
 a\otimes b\otimes c\otimes d& \longmapsto &ac\otimes b\otimes d
\end{eqnarray*}
In other words ${\mathcal H}_{\smop{CFc}}$ is a $\wt{\mathcal H}_{\smop{CF}}$-comodule coalgebra,
i.e. a coalgebra in the category of $\wt{\mathcal H}_{\smop{CF}}$-comodules.
\end{thm}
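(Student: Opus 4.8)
The plan is to evaluate both composites on an arbitrary cycle-free oriented graph $\Gamma$, expand each as a sum indexed by combinatorial data, and exhibit a bijection between the two indexing sets under which corresponding terms are equal. (Both sides are morphisms of commutative algebras $\Cal H_{\smop{CFc}}\to\wt{\Cal H}_{\smop{CF}}\otimes\Cal H_{\smop{CFc}}\otimes\Cal H_{\smop{CFc}}$ — for the right-hand one this uses that $m^{1,3}$ is multiplicative, which holds because $\wt{\Cal H}_{\smop{CF}}$ is commutative — so one could reduce to connected $\Gamma$, but this is not needed.) Writing $\Cal V=\Cal V(\Gamma)$ and using $\Phi(\Gamma)=\sum_\gamma\gamma\otimes\Gamma/\gamma$, the sum over poset-compatible covering subgraphs $\gamma$ of $\Gamma$, the left-hand side equals
\[(I\otimes\Delta_c)\Phi(\Gamma)=\sum_{\gamma}\ \sum_{W_1\sqcup W_2=\Cal V(\Gamma/\gamma),\ W_2<W_1}\gamma\otimes(\Gamma/\gamma)(W_1)\otimes(\Gamma/\gamma)(W_2),\]
while the right-hand side equals
\[m^{1,3}(\Phi\otimes\Phi)\Delta_c(\Gamma)=\sum_{V_1\sqcup V_2=\Cal V,\ V_2<V_1}\ \sum_{\gamma_1,\,\gamma_2}\gamma_1\gamma_2\otimes\big(\Gamma(V_1)/\gamma_1\big)\otimes\big(\Gamma(V_2)/\gamma_2\big),\]
the inner sum running over poset-compatible covering subgraphs $\gamma_i$ of $\Gamma(V_i)$ (each of these is cycle-free, being a subgraph of $\Gamma$). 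The proposed bijection sends $(\gamma,W_1,W_2)$ — where $\gamma$ has block set $W_1\sqcup W_2$, viewed as a partition of $\Cal V$ — to $(V_1,V_2,\gamma_1,\gamma_2)$ with $V_i=\bigcup_{P\in W_i}P$ and $\gamma_i$ the covering subgraph of $\Gamma(V_i)$ with block set $W_i$; the inverse reassembles $\gamma_1,\gamma_2$ into the covering subgraph of $\Gamma$ whose blocks are those of $\gamma_1$ and of $\gamma_2$ together, and records each $W_i$ as the block set of $\gamma_i$.

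The first ingredient is a ``restriction and contraction commute'' lemma: for a covering subgraph $\gamma$ of $\Gamma$ with block set $\Pi$ and any union of blocks $U=\bigcup_{P\in W}P$ ($W\subseteq\Pi$), the blocks in $W$ form a covering subgraph $\gamma_W$ of $\Gamma(U)$, and one has an equality of oriented Feynman graphs $(\Gamma/\gamma)(W)=\Gamma(U)/\gamma_W$. This is a direct verification on vertices (both sides have vertex set $W$), internal edges (both have the internal edges of $\Gamma$ running between two distinct blocks of $W$), and external edges — the only delicate point being that an internal edge of $\Gamma$ joining $U$ to its complement appears on each side as an external edge attached to the corresponding contracted vertex. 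Granting this, $\gamma=\gamma_1\gamma_2$ (the disjoint union of the connected graphs $\Gamma(P)$, $P\in W_1\sqcup W_2$, using $\Gamma(V_i)(P)=\Gamma(P)$ for $P\subseteq V_i$) and $(\Gamma/\gamma)(W_i)=\Gamma(V_i)/\gamma_i$, so the summand attached to $(\gamma,W_1,W_2)$ equals the summand attached to $(V_1,V_2,\gamma_1,\gamma_2)$.

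The second ingredient is that the bijection matches the constraints: ``$\gamma$ poset-compatible on $\Gamma$ and $W_2<W_1$ in $\Cal V(\Gamma/\gamma)$'' holds if and only if ``$V_2<V_1$ in $\Cal V(\Gamma)$, and $\gamma_i$ poset-compatible on $\Gamma(V_i)$ for $i=1,2$''. The engine is the observation that $V_2<V_1$ is equivalent to the absence of any internal edge of $\Gamma$ with source in $V_1$ and target in $V_2$: such an edge would put a vertex of $V_1$ strictly below one of $V_2$, against admissibility; conversely, a path realizing $v_1<v_2$ with $v_1\in V_1$, $v_2\in V_2$ must cross from $V_1$ to $V_2$ along such an edge. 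For the forward implication: $\Gamma(V_i)/\gamma_i=(\Gamma/\gamma)(W_i)$ is a subgraph of the cycle-free graph $\Gamma/\gamma$, hence cycle-free, so $\gamma_i$ is poset-compatible; and a path in $\Gamma$ from $v_1\in V_1$ to $v_2\in V_2$ would project to a directed walk in $\Gamma/\gamma$ from the ($W_1$-)block of $v_1$ to the ($W_2$-)block of $v_2$, making these comparable with the $W_1$-block below — contradicting $W_2<W_1$; hence $V_2<V_1$. For the reverse implication: the absence of a $V_1\to V_2$ edge means no directed cycle of $\Gamma/\gamma$ can involve blocks from both $W_1$ and $W_2$, so any such cycle lies entirely over $V_1$ or over $V_2$, i.e. is a cycle of $\Gamma(V_1)/\gamma_1$ or of $\Gamma(V_2)/\gamma_2$ — impossible; thus $\Gamma/\gamma$ is cycle-free and $\gamma$ is poset-compatible, and the same ``no crossing'' argument yields $W_2<W_1$. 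Combining the two ingredients gives the commutativity of the diagram; the concluding sentence ``$\Cal H_{\smop{CFc}}$ is a coalgebra in the category of $\wt{\Cal H}_{\smop{CF}}$-comodules'' then just restates that $\Delta_c$ is a comodule map, which is precisely the identity proved (together with the evident compatibility of the counits).

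The step I expect to be the main obstacle is this second ingredient, and within it the reverse implication: one must be certain that the two \emph{separate} poset-compatibility hypotheses on $\Gamma(V_1)$ and $\Gamma(V_2)$, together with admissibility of the cut, genuinely preclude every cycle in the single contracted graph $\Gamma/\gamma$. This is exactly where the ``no backward edge across the cut'' observation carries the argument, and where one must resist appealing to any (false) transitivity of $<$ on subsets of $\Cal V$; once that point is pinned down, the rest is bookkeeping.
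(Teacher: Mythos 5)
Your proof is correct and follows essentially the same route as the paper's: expand both composites on a graph $\Gamma$ and identify the two sums term by term via the reindexing $(\gamma,W_1,W_2)\leftrightarrow(V_1,V_2,\gamma_1,\gamma_2)$. The paper asserts this identification in a single chain of equalities without justification, whereas you make explicit the two facts it silently uses --- that restriction commutes with contraction, i.e. $(\Gamma/\gamma)(W_i)=\Gamma(V_i)/\gamma_i$, and that poset-compatibility of $\gamma$ together with $W_2<W_1$ is equivalent to $V_2<V_1$ together with poset-compatibility of each $\gamma_i$ --- both of which you verify correctly.
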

\begin{proof}
This result is a direct generalization of Theorem 8 in \cite{CEM} and is
proved in a similar way: the verification is immediate for the empty graph. We have for any nonempty graph:
\begin{eqnarray*}
(\mop{Id}_{\wt{\Cal H}_{\ssmop{CF}}}\otimes\Delta_{c})\circ\Phi\big(\Gamma)&=&(\mop{Id}_{\wt{\Cal H}_{\ssmop{CF}}}\otimes\Delta_{c})
						\left(\sum_{\gamma\smop{ poset-compatible covering }\atop \smop{
                              subgraph of } \Gamma}\gamma\otimes \Gamma/\gamma\right)\\
	&=&\sum_{\gamma\smop{ poset-compatible covering }\atop \smop{ subgraph of}\Gamma}\
    \sum_{U_1\sqcup U_2=\Cal V(\Gamma/\gamma),\ U_2<U_1}\gamma\otimes (\Gamma/\gamma)(U_1)\otimes (\Gamma/\gamma)(U_2).
\end{eqnarray*}
On the other hand we compute:
\begin{eqnarray*}
m^{1,3}\circ(\Phi\otimes\Phi)\circ\Delta_{c}(\Gamma)&=&
	m^{1,3}\circ(\Phi\otimes\Phi)\left(\sum_{V_1\sqcup V_2=\Cal V(\Gamma),\ V_2<V_1}\Gamma(V_1)\otimes\Gamma(V_2)\right)\\
	&\hskip -60mm = &\hskip -30mm 
	m^{1,3}\left(\sum_{V_1\sqcup V_2=\Cal V(\Gamma),\ V_2<V_1}\
      \sum_{\gamma'\smop{ poset-compatible covering }\atop \smop{ subgraph of }\Gamma(V_1)}\ 
	\sum_{\gamma''\smop{ poset-compatible covering }\atop \smop{ subgraph of }\Gamma(V_2)}
	\hskip -2mm  \gamma'\otimes \Gamma(V_1)/\gamma'\otimes\gamma''\otimes \Gamma(V_2)/\gamma'' \right)\\
	&\hskip -60mm =&\hskip -30mm 
	\sum_{V_1\sqcup V_2=\Cal V(\Gamma),\ V_2<V_1}\ \sum_{\gamma'\smop{ poset-compatible covering }\atop \smop{ subgraph of }\Gamma(V_1)}\ 
	\sum_{\gamma''\smop{ poset-compatible covering }\atop \smop{ subgraph of }\Gamma(V_2)}
	 \hskip -2mm \gamma'\gamma''\otimes \Gamma(V_1)/\gamma'\otimes \Gamma(V_2)/\gamma''\\
	&\hskip -60mm =&\hskip -30mm 
	\sum_{V_1\sqcup V_2=\Cal V(\Gamma),\ V_2<V_1}\ \sum_{\gamma\smop{ poset-compatible
        covering subgraph of }\Gamma\atop \smop{ without any internal edge
        between }V_1 \smop{ and }V_2}
	\hskip -7mm \gamma\otimes \Gamma(V_1)\Big\slash \gamma\cap \Gamma(V_1)\otimes \Gamma(V_2)\Big\slash \gamma\cap \Gamma(V_2)\\
	&\hskip -60mm =&\hskip -30mm 
	\sum_{\gamma\smop{ poset-compatible covering }\atop \smop{ subgraph of }\Gamma}\
    \sum_{U_1\sqcup U_2=\Cal V(\Gamma/\gamma),\ U_2<U_1}\gamma\otimes (\Gamma/\gamma)(U_1)\otimes(\Gamma/\gamma)(U_2),
\end{eqnarray*}
which proves the theorem.
\end{proof}
\section{Discarding external edges}
The whole discussion can be carried out from the beginning, dealing with graphs with only internal edges, leading to a bialgebra $\wt{\Cal K}$ and a Hopf algebra $\Cal K$, as well as to their variants $\wt{\Cal K}_{\smop{1PI}}$,  $\wt{\Cal K}_{\smop{CF}}$, $\wt{\Cal K}_{\smop{CF1PI}}$, ${\Cal K}_{\smop{1PI}}$,  ${\Cal K}_{\smop{CF}}$ and ${\Cal K}_{\smop{CF1PI}}$. The definition of a subgraph remains the same except that we discard the external edges which could appear. As an example we compute on our favourite example the analog of the coproduct $\Delta$ of Sections \ref{sect:ofg} and \ref {sect:hofg} respectively in this new framework:
\begin{eqnarray}
\Delta(\triangle)&=&\racine\racine\racine\otimes\triangle+\triangle\otimes\point
+2\racine\propag\otimes\flotdeux+\racine\propag\otimes\boucledeux\label{deltabig}\\
\Delta(\triangle)&=&\un\otimes\triangle+\triangle\otimes\un
+2\propag\otimes\flotdeux+\propag\otimes\boucledeux\label{deltahopf}.
\end{eqnarray}
The computations for the cycle-free variants are the same except that we discard the last term. Locally 1PI variants are straightforward and left to the reader. The admissible cut coproduct of Section \ref{AC} reads on the "triangle" graph:
\begin{equation}
\Delta_c(\triangle)=\triangle\otimes\un+\un\otimes\triangle+\racine\otimes\propag+\propag\otimes\racine.
\end{equation}
The coproduct $\Delta$ of $\wt{\Cal K}_{CF}$ coincides on rooted forests
with the coproduct of the bialgebra $\wt {\Cal H}$ of \cite{CEM}, and the coproduct
$\Delta_c$ coincides on rooted forests with the Connes-Kreimer coproduct
$\Delta_{\smop{CK}}$. The main difference here is that there are no pre-Lie structures \cite{MS08} associated with these Hopf algebras of Feynman graphs, which are not right-sided in the sense of \cite{LoRo}.\\

It would be interesting to develop objects similar to B-series such that "composition" and "substitution" of these objects are reflected by this construction, thus generalizing \cite{CHV} and \cite{CEM}.

\end{document}